\newenvironment{enumeratei}{\begin{enumerate}[\upshape (i)]}{\end{enumerate}}
\numberwithin{equation}{section}
\theoremstyle{plain}
 \newtheorem{theorem}{Theorem}[section]
 \newtheorem{lemma}[theorem]{Lemma}
 \newtheorem{proposition}[theorem]{Proposition}
 \newtheorem{corollary}[theorem]{Corollary}
\theoremstyle{definition}
\theoremstyle{remark}
\newcommand\idlat [1] {\mathfrak I(#1)}
\newcommand\auv[3] {a^{#1#2}_{#3}}
\newcommand\buv[3] {b^{#1#2}_{#3}}
\newcommand\cfel[3] {\widehat c^{\kern 1.5pt#1#2}_{#3}}
\newcommand\cdn[3] {\breve c^{\kern 0.5pt#1#2}_{#3}}
\newcommand\dfel[3] {\widehat d^{\kern 1.5pt#1#2}_{#3}}
\newcommand\ddn[3] {\breve d^{\kern 0.5pt#1#2}_{#3}}
\newcommand \xx {\xi}
\newcommand \yy {\psi}
\newcommand \zz {\zeta}
\newcommand \Equ {\textup{Equ}}
\newcommand \rank {r}
\newcommand \hal {\widehat\alpha}
\newcommand \hbe {\widehat\beta}
\newcommand \hga {\widehat\gamma}
\newcommand \hde {\widehat\delta}
\newcommand \hep {\widehat\epsilon}
\newcommand \htt [1]{\widehat t^{\kern 0.8pt(#1)}}
\newcommand \szi[1] {\sigma^{(#1)}}
\newcommand \Suv [2]{S^{#1#2}}
\newcommand \tuple [1] {\langle #1\rangle}
\newcommand \pair [2] {\tuple{#1,#2}}
\newcommand \blokk[2] {#1/#2}
\newcommand \restrict[2] {{#1\rceil_{#2}}}
\newcommand \ideal {\mathord{\downarrow}}
\renewcommand\emptyset{\varnothing}
\newcommand\red[1]{{\textcolor{red}{#1}}}
\newcommand \tbf [1] {\textbf{#1}} 
\newcommand \set[1] {\{#1\}}
\newcommand \zero[1]{\Delta_{#1}}
\newcommand \unit[1]{\nabla_{\kern -3pt #1}}
\newcommand \carm {\mathfrak m}
\newcommand \fcitind [2] {{^{\textup{f}}\kern-1pt #1_{\textup{\cite{#2}}}}}
\newcommand \icitind [2] {{^{\textup{i}}\kern-1pt #1_{\textup{\cite{#2}}}}}
\begin{document}
\title[Sublattices of three-generated lattices] 
{Lattices  embeddable in  three-generated lattices}

\author[G.\ Cz\'edli]{G\'abor Cz\'edli}
\email{czedli@math.u-szeged.hu}
\urladdr{http://www.math.u-szeged.hu/\textasciitilde{}czedli/}
\address{University of Szeged, Bolyai Institute, 
Szeged, Aradi v\'ertan\'uk tere 1, HUNGARY 6720}

\thanks{This research was supported by
NFSR of Hungary (OTKA), grant number K 115518}

\dedicatory{Dedicated to the memory of professor L\'aszl\'o Megyesi $(\kern-1pt$1939--2015$\kern1pt)$, former head of the Department of Algebra and Number Theory of the University of Szeged}

\date{\hfill \red{December 12, 2015}}

\subjclass[2000] {Primary 06B99, secondary 06B15}

\keywords{Three-generated lattice, equivalence lattice, partition lattice, complete lattice embedding, inaccessible cardinal}

\begin{abstract} 
We prove that every finite lattice $L$ can be embedded in a three-generated \emph{finite} lattice $K$. We also prove that every \emph{algebraic} lattice with accessible cardinality is a \emph{complete} sublattice of an appropriate \emph{algebraic} lattice $K$ such that $K$ is completely generated by three elements. Note that ZFC has a model in which all cardinal numbers are accessible. 
Our results strengthen P.\ Crawley and R.\ A.\ Dean's 1959 results by adding finiteness, algebraicity, and completeness.
\end{abstract}

\maketitle

\section{Introduction and result}
A lattice $K$ is \emph{three-generated} if there are $a,b,c\in K$ such that for every sublattice $S$ of $K$, $\set{a,b,c}\subseteq S$ implies $S=K$. 
We know from Crawley and Dean~\cite[Theorem 7]{crawleydean} that every at most countably infinite lattice $L$ is a sublattice of a three-generated lattice. A complete lattice  $L$ is \emph{completely} generated by a subset $X$ if the only complete sublattice of $L$ including $X$ is $L$ itself. By
\cite[Theorem 7]{crawleydean} again, every lattice is a sublattice of a complete lattice completely generated by three elements. Our aim is to strengthen these statements for lattices $L$ such that $|L|$ is neither $\aleph_0$, nor rather large; see Corollaries~\ref{corolone} and \ref{coroltwo}  soon. 
Actually, we are going to prove a theorem on equivalence lattices; our theorem combined with deep results from the literature will easily imply the corollaries.

Next, we recall some well-known definitions. 
An element $a$ in a complete lattice $L$ is \emph{compact} if whenever $a\leq \bigvee X$, then $X$ has a finite subset $Y$ with $a\leq \bigvee Y$. A complete lattice is \emph{algebraic} if each of its elements is the join of (possibly, infinitely many) compact elements. Most lattices related to algebraic and other mathematical structures are algebraic lattices.
A cardinal $\kappa$ is \emph{inaccessible} if 
\begin{enumeratei}
\item $\kappa>\aleph_0$,
\item for every cardinal $\lambda$, $\,\lambda<\kappa$ implies $2^\lambda<\kappa$, and 
\item for every set $I$ of cardinals, if $|I|<\kappa$ and each member of $I$ is less than $\kappa$, then $\sum\set{\lambda: \lambda\in I}<\kappa$. 
\end{enumeratei}
For convenience, a cardinal $\lambda$ will be called \emph{accessible} if there is no inaccessible cardinal $\kappa$ such that $\kappa\leq \lambda$. Our terminology is slightly different from the one used in Keisler and Tarski~\cite{keislertarski}, since finite cardinals are accessible here.
Note that there can be cardinals that are neither accessible, nor inaccessible. 
Inaccessible cardinals are also called strongly inaccessible. Inaccessible cardinals, if exist, are extremely large; see, for example,  Kanamori~\cite[page 18]{kanamori} or  Levy~\cite[pages 138--141]{levy}. 
The ``everyday's cardinals'' like  $0,1,2,3$,\dots, $\aleph_0$, $\aleph_1$, $\aleph_2$, $\aleph_3$, $2^{\aleph_0}$, $2^{2^{\aleph_0}}$, etc.~ are accessible.     We know from Kuratowski~\cite{kuratowski}, see also \cite[page 18]{kanamori} or \cite{levy}, that ZFC has a model without inaccessible cardinals. 
That is, in some model of ZFC,  all  cardinals are accessible and  belong to the scope of our results.
Given a set $A$, the lattice of all equivalence relations on $A$ with respect to set inclusion is 
denoted by $\Equ(A)=\tuple{\Equ(A); \subseteq}$; 
it is called the \emph{equivalence lattice} over $A$. Now, we are in the position to formulate the main result of the paper. 

\begin{theorem}\label{thmmain} For every  set $A$ of accessible cardinality $|A|\geq 3$, there exist a set $B$ and a complete sublattice $K$ of $\Equ(B)$ such that
$K$ is completely generated by three of its elements and $\Equ(A)$ is isomorphic to a complete sublattice of $K$. Furthermore, we can choose a finite $B$ if $A$ is finite, and we can let $B=A$~otherwise. 
\end{theorem}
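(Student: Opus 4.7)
The plan is to realise all three strengthenings---finiteness, algebraicity, and completeness---uniformly inside the equivalence lattice $\Equ(B)$, exploiting that $\Equ(B)$ is automatically algebraic and that any sublattice closed under arbitrary joins and meets is automatically complete and algebraic. Explicitly, I would construct a set $B$ together with three equivalences $\alpha,\beta,\gamma\in\Equ(B)$, let $K$ be the complete sublattice they completely generate, and then exhibit a complete lattice embedding of $\Equ(A)$ into $K$. For infinite $A$ the natural choice is $B=A$, and I would base the definition of $\alpha$, $\beta$, $\gamma$ on an auxiliary product-type decomposition of $A$, chosen so that coordinate equivalences together with a ``twisting'' equivalence suffice to recover every pair-collapsing atom of $\Equ(A)$; standard cardinal arithmetic for accessible cardinals makes such a decomposition available. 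For finite $A$ I would instead embed $A$ into a combinatorially richer \emph{finite} set $B$ on which the analogous gadget still fits.

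Once the three generators are in place, the embedding $\varphi\colon\Equ(A)\hookrightarrow\Equ(B)$ is built in two stages. First, since $\Equ(A)$ is atomistic, it suffices to define $\varphi$ on its atoms, i.e.\ on the equivalences collapsing a single pair $\pair{x}{y}\in A^2$; I would realise each such $\varphi(\varepsilon_{xy})$ as an equivalence in $\Equ(B)$ that collapses the pair $\pair{x}{y}$ while keeping the auxiliary coordinates under control, and show by direct computation that $\varphi(\varepsilon_{xy})$ arises as a fixed meet--join expression in $\alpha,\beta,\gamma$ (possibly with infinitary joins and meets). Then I would extend $\varphi$ to all of $\Equ(A)$ by forming arbitrary joins of atoms; since $K$ is closed under arbitrary joins, these values remain inside $K$. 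The remaining work is to show that $\varphi$ preserves arbitrary meets and is injective, both naturally done by reading off the ``coordinates'' of a pair $\pair{u}{v}\in B^2$ from the blocks in which $\varphi(\theta)$ places it.

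The main obstacle I foresee is a simultaneous balancing act: the generators $\alpha,\beta,\gamma$ must be rich enough that every atom of $\Equ(A)$ is expressible by a single lattice term allowing arbitrary joins and meets, yet restrained enough that the complete sublattice $K$ they generate does \emph{not} collapse more pairs than intended, so that $\varphi$ stays injective and meet-preserving. The accessibility hypothesis on $|A|$ enters at precisely this point, keeping the auxiliary structures on $B$ within a cardinal range where the needed product decompositions and chain-length estimates underlying completeness go through without invoking inaccessible cardinals. The finite case looks technically hardest, since one must exhibit a concrete bounded combinatorial gadget that realises the same encoding on a finite universe; I expect the infinite case to essentially reduce to set-theoretic bookkeeping, while the finite case will require an explicit construction reminiscent of the finite partition-lattice embeddings in the spirit of Pudl\'ak and T\r uma.
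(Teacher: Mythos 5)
Your plan has a genuine gap at its core: you never supply the mechanism by which three equivalences can completely generate a sublattice large enough to contain a copy of $\Equ(A)$, and the route you sketch for producing one does not work. The paper's proof does not construct three generators directly. It starts from the known fact (Strietz, Z\'adori for finite $A$; the author's earlier papers for infinite accessible $A$) that $\Equ(A)$ is \emph{completely generated by four} elements $\alpha_0,\beta_0,\gamma_0,\delta_0$, arranged so that $\alpha_0\leq\gamma_0\vee\delta_0$ and $\beta_0\leq\gamma_0\vee\delta_0$. The whole content of the proof is then a four-to-three reduction (Lemma~\ref{keylemma}): one enlarges $A$ to a set $A_1$ by gluing a finite edge-colored ``auxiliary graph'' onto each eligible pair of each $\epsilon_0$, reads off three equivalences $\xx_1,\yy_1,\zz_1$ from the three color classes, and verifies by a block-by-block computation that four fixed ternary terms $\hal,\hbe,\hga,\hde$, evaluated at $\xx_1,\yy_1,\zz_1$ and then cut down by $\hep_1\wedge(\hga_1\vee\hde_1)$ and factored by a suitable $\Theta$, reproduce the original four generators. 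This is where the accessibility hypothesis actually enters --- it is needed for the four-generation results being quoted, not for any ``product decomposition'' of $A$; and the finite case is not the hard one but falls out of the same construction because the gadgets are finite (Pudl\'ak--T\r uma is used only for Corollary~\ref{corolone}, not for the theorem).

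Two further steps in your outline would fail as stated. First, defining $\varphi$ on the atoms of $\Equ(A)$ and extending by arbitrary joins yields a join-preserving map, but preservation of arbitrary meets is not automatic for such an extension and is exactly the delicate point; the paper avoids this by transporting a complete generating set through a quotient by $\Theta$ and invoking the Correspondence Theorem, so that the isomorphism $L_2\cong L_0$ is obtained structurally rather than atom by atom. Second, your ``balancing act'' is harder than you suggest: since $\Equ(B)$ itself is \emph{not} completely three-generated for $|B|>3$, you would need every $\varphi(\varepsilon_{xy})$ to lie in the proper complete sublattice $K$ generated by your three equivalences, and exhibiting even one family of three equivalences with this property is essentially the whole theorem. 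Without the auxiliary-graph encoding (or some substitute for it), the proposal does not close.
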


The role of $|A|\geq 3$ in Theorem~\ref{thmmain} is to ensure that $|\Equ(A)|\geq 3$.
Note that a finite lattice is completely generated by three elements if and only if it is  three-generated in the usual sense. 
Note also that $K$ in the theorem is necessarily a proper complete sublattice of $\Equ(B)$ if $|B|>3$, because $\Equ(B)$ cannot be completely generated by three elements; see Strietz~\cite{strietz} or Z\'adori~\cite{zadori} for the finite case,  and see the paragraph following the theorem in \cite{czgfourlarge} for the infinite case.  
For the sake of another terminology, we rephrase Theorem~\ref{thmmain} as follows. A \emph{complete embedding} is an injective map preserving arbitrary (possibly infinite) meets and joins.

\begin{proposition} If $A$ is a set and $|A|\geq 3$ is an  accessible cardinal, then $\Equ(A)$ has a complete embedding in 
a complete sublattice $K$ of some $\Equ(B)$ such that $K$ is completely generated by only three elements. We can let $B=A$ if $A$ is infinite, and we can choose a finite $B$ if $A$ is finite.
\end{proposition}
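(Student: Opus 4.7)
The plan is to derive the Proposition as an immediate reformulation of Theorem~\ref{thmmain}; no new combinatorial or set-theoretic work is required. The essential observation is that the phrase ``$\Equ(A)$ is isomorphic to a complete sublattice of $K$'' used in the theorem expresses the same content as ``$\Equ(A)$ has a complete embedding in $K$'' used in the proposition.

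First I would unwind the two notions and argue they coincide. If $\varphi\colon \Equ(A)\to K$ is a complete embedding, then $\varphi$ is an injective map preserving arbitrary joins and meets, so its image $\varphi(\Equ(A))$ is closed under arbitrary joins and meets in $K$; hence $\varphi(\Equ(A))$ is a complete sublattice of $K$ and $\varphi$ restricts to an isomorphism $\Equ(A)\cong \varphi(\Equ(A))$. Conversely, given an isomorphism $\psi$ from $\Equ(A)$ onto a complete sublattice $L\subseteq K$, composition with the inclusion $L\hookrightarrow K$ yields a complete embedding $\Equ(A)\to K$. Both implications are formal: injectivity plus preservation of arbitrary joins and meets is precisely what ``isomorphism onto a complete sublattice'' means.

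Second, I would transfer the remaining conclusions of Theorem~\ref{thmmain}. The hypothesis that $|A|\geq 3$ is accessible is the same, the sublattice $K\subseteq \Equ(B)$ is the same object, and the clause that $K$ is completely generated by three elements is carried verbatim from the theorem to the proposition. The case distinction on $B$—that $B$ may be chosen finite when $A$ is finite, and $B=A$ when $A$ is infinite—also transfers without change, so the whole statement follows.

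I do not anticipate any real obstacle here: every difficulty is packed into the proof of Theorem~\ref{thmmain}, and the Proposition is only a terminological repackaging aimed at readers who prefer to think in terms of complete embeddings rather than of isomorphisms onto complete sublattices.
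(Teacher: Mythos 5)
Your proposal is correct and matches the paper exactly: the paper explicitly presents the Proposition as a terminological rephrasing of Theorem~\ref{thmmain} (``For the sake of another terminology, we rephrase Theorem~\ref{thmmain} as follows'') and gives no separate proof, so your unwinding of ``complete embedding'' versus ``isomorphism onto a complete sublattice'' is precisely the intended argument.
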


\begin{corollary}\label{corolone}
Every finite lattice is the sublattice of an appropriate three-generated \emph{finite} lattice. 
\end{corollary}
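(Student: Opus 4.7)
The plan is to reduce Corollary~\ref{corolone} to Theorem~\ref{thmmain} by first passing through a finite partition lattice. Given a finite lattice $L$, I would invoke the classical theorem of Pudl\'ak and T\r{u}ma, which asserts that every finite lattice embeds as a sublattice in $\Equ(A)$ for some finite set $A$. By enlarging $A$ if necessary (for instance, adding a few isolated points), I may assume $|A|\geq 3$, and $|A|$ is trivially an accessible cardinal since it is finite.

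Next, I would apply Theorem~\ref{thmmain} to this finite set $A$. The theorem supplies a finite set $B$ together with a sublattice $K$ of $\Equ(B)$ such that $\Equ(A)$ embeds (completely, but this is automatic in the finite case) into $K$, and $K$ is completely generated by three elements. Since $B$ is finite, $\Equ(B)$ is finite, hence so is $K$. In a finite lattice, being completely generated by three elements coincides with being three-generated in the usual sense, as the paper explicitly notes after Theorem~\ref{thmmain}.

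Composing the two embeddings yields $L \hookrightarrow \Equ(A) \hookrightarrow K$, so $L$ is a sublattice of the three-generated finite lattice $K$, which is exactly the statement of Corollary~\ref{corolone}. There is no real obstacle here once Pudl\'ak--T\r{u}ma is in hand; the only thing to be careful about is the distinction between complete generation and ordinary generation, which trivializes in the finite setting, and the harmless enlargement of $A$ to meet the hypothesis $|A|\geq 3$.
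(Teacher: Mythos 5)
Your proposal is correct and follows exactly the paper's own argument: embed the finite lattice in a finite $\Equ(A)$ via Pudl\'ak--T\r{u}ma and then apply Theorem~\ref{thmmain}. The extra care you take about $|A|\geq 3$ and about complete versus ordinary generation in the finite case is sound and only makes explicit what the paper leaves implicit.
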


To point out the difference between  this corollary and the afore-mentioned Crawley and Dean~\cite[Theorem 7]{crawleydean}, note that Corollary~\ref{corolone} refers to a construction that preserves finiteness. However, as opposed to \cite[Theorem 7]{crawleydean}, our corollary does not include the case when the original lattice is countably infinite.

\begin{proof}[{Proof of Corollary~\ref{corolone}}] By Pudl\'ak and T\r uma~\cite{pudlaktuma}, we can embed our lattice in some  $\Equ(A)$ such that $A$ is finite.  Thus,  Theorem~\ref{thmmain} is applicable.
\end{proof}

\begin{corollary}\label{coroltwo}
Every algebraic lattice of accessible cardinality is the \emph{complete} sublattice of an \emph{algebraic} lattice $K$ that is completely generated by three elements.
\end{corollary}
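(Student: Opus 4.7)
The plan is to reduce Corollary~\ref{coroltwo} to Theorem~\ref{thmmain} by first representing the given algebraic lattice as a complete sublattice of a partition lattice, and then applying the theorem to embed that partition lattice further into a lattice $K$ completely generated by three elements.

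Let $L$ be an algebraic lattice of accessible cardinality; we may assume $|L|\geq 3$, the remaining cases being trivial. The first step is to embed $L$ as a complete sublattice of $\Equ(A)$ for some set $A$ of accessible cardinality. For infinite $L$, a cardinality-aware variant of the Gr\"atzer--Schmidt representation theorem expresses $L$ as the congruence lattice of an algebra on a set $A$ whose cardinality is bounded in terms of $|L|$ (and hence is still accessible); and the congruence lattice of any algebra is automatically a complete sublattice of the equivalence lattice of its underlying set. For finite $L$, I would instead invoke Pudl\'ak--T\r uma to obtain a finite $\Equ(A)$, exactly as in the proof of Corollary~\ref{corolone}.

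Having $L$ complete-embedded in $\Equ(A)$ with $|A|$ accessible and $|A|\geq 3$, I then apply Theorem~\ref{thmmain} to $A$. This produces a set $B$ and a complete sublattice $K$ of $\Equ(B)$, completely generated by three of its elements, with $\Equ(A)$ embedded as a complete sublattice of $K$. Composing the two complete embeddings yields a complete embedding of $L$ into $K$.

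The remaining task is to verify that $K$ itself is algebraic. This is the main obstacle, because a complete sublattice of an algebraic lattice need not be algebraic in general, so Theorem~\ref{thmmain} cannot simply be cited as a black box at this step. The expected scenario is that the proof of Theorem~\ref{thmmain} constructs $K$ by a ``finitary'' generation process starting from three equivalences, in such a way that every $\theta\in K$ is a complete join of equivalences on $B$ having only finitely many nontrivial blocks, each finite. Such equivalences are compact in $\Equ(B)$ and, being in $K$, remain compact there, which delivers algebraicity of $K$. If the construction does not manifestly give this, a fallback is to enlarge $K$ by adjoining enough compact equivalences of $\Equ(B)$ to restore algebraicity while being careful not to destroy the three-element complete generation.
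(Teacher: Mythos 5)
Your reduction is the same as the paper's: represent $L$ as a complete sublattice of some $\Equ(A)$ with $|A|$ accessible (via a cardinality-aware reading of Gr\"atzer--Schmidt), apply Theorem~\ref{thmmain}, and compose the complete embeddings. The gap is entirely in your last step. You assert that ``a complete sublattice of an algebraic lattice need not be algebraic in general'' and therefore set out to verify algebraicity of $K$ by hand. This assertion is false: complete sublattices of algebraic lattices \emph{are} algebraic, and this is exactly the well-known fact the paper invokes (Gr\"atzer--Schmidt, Theorem~8, or Nation's notes, Exercise~3.6) to finish in one line, since $\Equ(B)$ is algebraic. A quick proof: for $a\in K$ and each compact $c\leq a$ in the ambient algebraic lattice, put $\widehat c=\bigwedge\set{x\in K: c\leq x}\in K$; then $c\leq\widehat c\leq a$, so $a=\bigvee\set{\widehat c: c\text{ compact},\ c\leq a}$, and $\widehat c$ is compact in $K$ because joins in $K$ agree with joins in the ambient lattice and $c$ is compact there.

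Because you deny this fact, you replace the easy conclusion with an unexecuted program that does not work as stated. Your first option --- showing every element of $K$ is a join of equivalences with finitely many nontrivial finite blocks --- is not established by anything in the construction of Theorem~\ref{thmmain} and is not needed. Your fallback --- enlarging $K$ by adjoining compact equivalences of $\Equ(B)$ --- is worse: any enlargement threatens precisely the property that $K$ is completely generated by three elements, which is the whole point of the corollary, and you give no mechanism for preserving it. So the proposal as written does not close; once you accept the standard lemma on complete sublattices of algebraic lattices, the argument collapses to the paper's proof.
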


Theorem 7 for complete lattices in  Crawley and Dean~\cite{crawleydean}  pays attention to the ``degree'' of completeness, so it is more involved than its simplified form given at the beginning of this section. However, Corollary~\ref{coroltwo} adds two new features even to the original 
\cite[Theorem 7]{crawleydean}: here the sublattice is a \emph{complete} sublattice and $K$ is an \emph{algebraic} lattice. 
Note that we can easily derive the simplified form of \cite[Theorem~7]{crawleydean} for a lattice $L$ with  accessible cardinality from Corollary~\ref{coroltwo} as follows:  embed $L$  in its \emph{ideal lattice} $\idlat L$ and apply Corollary~\ref{coroltwo} to $\idlat L$; in this way, we obtain that $L$ is (isomorphic to) a sublattice of $K$, where $K$ is 
from Corollary~\ref{coroltwo}.

Conversely, we do not see any straightforward way to derive Corollary~\ref{coroltwo} directly from 
 \cite[Theorem~7]{crawleydean}. 
In particular, if $K$ is a complete lattice completely generated by $\set{x,y,z}$, then there seems to be no reason why the algebraic lattice $\idlat K$ should be completely generated by the set $\set{\ideal x, \ideal y, \ideal z}$ of principal ideals, because $a=\bigvee\set{b_i: i\in I}$ in $K$
need not imply that $\ideal a$ equals $    \bigvee\set{\ideal b_i: i\in I} =     \bigcup\set{\bigvee\set{\ideal b_j: j\in J}:J\subseteq I \text{ and }J\text{ is finite}}$.

\begin{proof}[{Proof of Corollary~\ref{coroltwo}}] By the Gr\"atzer--Schmidt theorem, see \cite{gratzerschmidt}, we can assume that our lattice $L$ is the congruence lattice of an algebra $A$. So $L$ is a complete sublattice of $\Equ(A)$. Since $|L|$ is accessible, the construction in \cite{gratzerschmidt} shows that $|A|$ is also accessible. (This is not surprising, since the class of sets with accessible cardinalities is closed under ``reasonable'' constructions.) Hence, Theorem~\ref{thmmain} yields that $L$ is a complete sublattice of a complete sublattice $K$ of some $\Equ(B)$ such that $K$ is completely generated by three elements. It is well known, see Gr\"atzer and Schmidt~\cite[Theorem 8]{gratzerschmidt} or Nation~\cite[Exercise 3.6]{nationbook},  that complete sublattices of algebraic lattices are algebraic. Therefore, since  $\Equ(B)$ is an algebraic lattice, so is $K$, as required.
\end{proof}

\subsection{Outline, prerequisite, and method} 
The rest of the paper is devoted to the proof of Theorem~\ref{thmmain}. 

Only  basic knowledge of lattice theory is assumed.  For example,  a small part of  each  of the books  Burris and Sankappanavar~\cite{burrissankappanavar},
Davey and Priestley~\cite{daveypriestley},
Gr\"atzer~\cite{gratzergltfound}, 
McKenzie, McNulty, and Taylor~\cite{mckenziemcnultytaylor}, and Nation~\cite{nationbook} is sufficient. 

Our approach towards Theorem~\ref{thmmain} has three main ingredients but they are mostly hidden behind the scenes. First, we need the fact that $\Equ(A)$ is completely generated by four elements if $|A|$ is accessible; see \cite{czgfourlarge} and \cite{czgooptwo} for the infinite case and  Strietz~\cite{strietz} and Z\'adori~\cite{zadori} for the finite case. Second,   \cite{czgtestlat} and  \cite{czgflomega} give the vague idea that we need some auxiliary graphs. Third, the appropriate graphs given in Figure~\ref{figmhhvsz} are taken from Skublics~\cite{skublics}. His graphs are symmetric;  this explains why they are more appropriate for our plan than those in \cite{czgtestlat}. (Actually, we have not checked whether the graphs from \cite{czgtestlat} could be used here.) Note that we cannot use the statements of  \cite{czgtestlat},  \cite{czgflomega}, and  \cite{skublics} in the present setting, because \cite{czgtestlat} and \cite{skublics} are related only to the  particular case  $|A_0|=2$ of  Lemma~\ref{keylemma}. Hence, we borrow only the ideas and some methods from these sources without explicit further reference.

\section{Auxiliary statements and the proof}
\subsection{Infinitary lattice terms} A cardinal $\carm$ is \emph{regular} if it is infinite and whenever $I$ is a set of cardinals smaller than $\carm$ and $|I|<\carm$, then $\sum\set{\lambda: \lambda\in I}<\carm$. 
For example, $\aleph_0$, $\aleph_1$, and $2^{\aleph_0}$ are regular. Let $\carm$ always denote a regular cardinal.  Following Gr\"atzer and Kelly~\cite{gratzerkelly}, a lattice $L$ is \emph{$\carm$-complete} if for every $\emptyset\neq X\subseteq L$ such that $X|<\carm$, both $\bigwedge X$ and $\bigvee X$ exists. Note that $\carm$ will always be chosen to be larger than the cardinality of the lattice we deal with, so $\carm$-completeness will simply mean the completeness of the lattice in question. We need $\carm$ only because we want to define \emph{$\carm$-terms} and their \emph{ranks} by transfinite induction as follows. 
Let $X$ be a set of variables. (Later, $X$ will consist of three or four elements.) 
Each element $x$ of $X$ is an $\carm$-term of rank $\rank(x)=0$. 
So the set of $\carm$-terms of rank 0 over $X$ is $T_0(\carm, X):=X$.
Assume that $\chi>0$ is an ordinal number and $T_\lambda(\carm,X)$ has already been defined for all ordinals $\lambda$ being less than $\chi$. Let $T'_\chi(\carm, X)$ be the set of all formal expressions $\bigvee(t_i:i \in I)$ and $\bigwedge(t_i:i \in I)$ where $2\leq |I|<\carm$ and for each $i\in I$, $t_i\in T_{\lambda_i}(\carm, X)$ for some $\lambda_i<\chi$. Note that $\bigvee(t_i:i \in I)$ is understood as the abbreviation of the pair $\tuple{\bigvee, \set{\pair{t_i}{i}:  i\in I}}$, and similarly for meets. (An alternative and more rigorous way, which  will not be followed, is to consider  $\bigvee(t_i: i \in I)$ a rooted tree whose leaves are labeled with variables, the rest of nodes with $\bigvee$ and $\bigwedge$, and the root is labeled by $\bigvee$.) We define 
\[
T_\chi(\carm,X):=T'_\chi(\carm,X)\cup  \bigcup\set{T_\lambda(\carm,X): \lambda<\chi}\text.
\]
Note that the sets $T_\chi(\carm,X)$, where $\chi$ ranges over the class of ordinals, form an increasing sequence. Finally, we let 
\[
T(\carm,X):=\bigcup\set{T_\chi(\carm,X):\chi\text{ is an ordinal}},
\]
which is the collection of \emph{$\,\carm$-terms} over $X$. 
For $t\in T(\carm,X)$, the \emph{rank} $\rank(t)$ of $t$ 
is the smallest ordinal $\chi$ with $t\in T_\chi(\carm,X)$. If $X=\set{x_1,\dots, x_n}$, then  $t\in T(\carm,X)$ is often denoted by $t(x_1,\dots, x_n)$ to indicate the set of variables.  
If $L$ is a complete lattice, $a_1,\dots,a_n$ are (not necessarily distinct) elements of $L$, and $t=t(x_1,\dots,x_n)\in T(\carm,\set{x_1,\dots,x_n})$, then the \emph{substitution} $t(a_1,\dots,a_n)\in L$ makes sense and it is defined by an obvious induction. The importance of $\carm$-terms is explained by the following lemma; its proof is a trivial induction and will be omitted.

\begin{lemma}\label{lemmateRmGen}
Let $L$ be a complete lattice, $|L|<\carm$, and let $a_1,\dots, a_n\in L$. Then  the smallest complete sublattice of $L$ that includes $\set{a_1,\dots, a_n}$ is
$\set{t(a_1,\dots, a_n): t\in T(\carm,\set{x_1,\dots,x_n})}$. 
In particular, $L$ is generated by $\set{a_1,\dots,a_n}$ as a complete lattice if and only if each $b\in L$ can be represented in the form $b=t(a_1,\dots,a_n)$ for some $t(x_1,\dots,x_n)\in T(\carm,\set{x_1,\dots,x_n})$. 
\end{lemma}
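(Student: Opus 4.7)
The plan is to prove both inclusions between the set
$T := \set{t(a_1,\dots, a_n): t\in T(\carm,\set{x_1,\dots,x_n})}$
and the smallest complete sublattice $S$ of $L$ containing $\set{a_1,\dots,a_n}$. The direction $T\subseteq S$ is a straightforward transfinite induction on $\rank(t)$: if $\rank(t)=0$ then $t$ is some variable $x_i$ and $t(a_1,\dots,a_n)=a_i\in S$; if $t$ has positive rank, it is of the form $\bigvee(t_i:i\in I)$ or $\bigwedge(t_i:i\in I)$ with every $t_i$ of strictly smaller rank, and the induction hypothesis together with the closure of $S$ under arbitrary joins and meets places $t(a_1,\dots,a_n)$ in $S$.

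For the reverse inclusion $S\subseteq T$ it suffices to verify that $T$ is itself a complete sublattice that contains each $a_i$. Containment of the $a_i$ is immediate since each $x_i$ is a rank-$0$ term. For closure, let $Y\subseteq T$ with $|Y|\geq 2$; since $Y\subseteq L$ and $|L|<\carm$, we have $|Y|<\carm$, and (using choice) we may pick for every $y\in Y$ an $\carm$-term $t_y$ with $t_y(a_1,\dots,a_n)=y$. Then $\bigvee(t_y:y\in Y)$ and $\bigwedge(t_y:y\in Y)$ are legitimate $\carm$-terms, and their evaluations are respectively $\bigvee Y$ and $\bigwedge Y$, so these belong to $T$. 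Singleton subsets contribute nothing new. The ``in particular'' clause is just the specialization $S=L$.

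I do not anticipate a real obstacle: the lemma is essentially a bookkeeping statement that justifies the cardinality restriction $|I|<\carm$ imposed in the definition of $\carm$-term. The only point requiring a moment's attention is ensuring that each aggregation $\bigvee(t_y:y\in Y)$ formed during the closure step respects the bound $|Y|<\carm$, and this is precisely what the hypothesis $|L|<\carm$ secures.
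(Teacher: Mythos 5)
Your argument is correct and is exactly the ``trivial induction'' that the paper declines to write out: one inclusion by transfinite induction on the rank of the term, the other by observing that the hypothesis $|L|<\carm$ keeps every aggregation $\bigvee(t_y:y\in Y)$ within the size bound required of an $\carm$-term, so the set of term values is itself a complete sublattice containing the generators. No gaps; this is the intended proof.
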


Note a particular case of Lemma~\ref{lemmateRmGen}: if $L$ is finite, then $\{t(a_1,\dots,a_n): t\in T(\aleph_0,\set{x_1,\dots,x_n})\}$ is the sublattice generated by $\set{a_1,\dots,a_n}$ in the usual sense.

\begin{figure}[htc]
\centerline
{\includegraphics[scale=1.0]{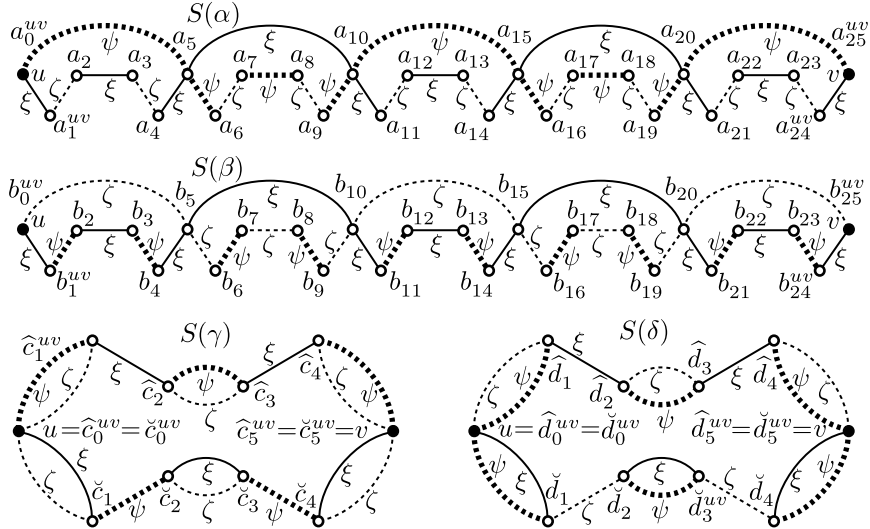}}
\caption{The auxiliary graphs\label{figmhhvsz}}
\end{figure}

\subsection{Blowing equivalence lattices up with auxiliary graphs}
The graphs $S(\alpha)$, \dots, $S(\delta)$ given in Figure~\ref{figmhhvsz} (but here we consider them without the superscripts $uv$) will be called  \emph{auxiliary graphs}; they are the key gadgets in our construction. Before formulating a lemma on these graphs, we need some easy 
definitions. Let $A_0$ be a set and let $\alpha_0,\beta_0,\gamma_0,\delta_0\in \Equ(A_0)$.   We define a larger set $A_1$ and equivalences $\xx_1,\yy_1,\zz_1\in \Equ(A_1)$ as follows. First,  we fix a wellordering on $A_0$ for convenience. A pair $\pair uv\in A_0^2$ is a \emph{nontrivial pair} if $u\neq v$. If $\pair uv\in A_0^2$ is a nontrivial pair such that  $u$ precedes $v$ with respect to the fixed wellordering, then  $\pair uv\in A_0^2$ is said to be an \emph{eligible pair}.
For each eligible pair $\pair uv\in \alpha_0$, we insert a copy of $S(\alpha)$ such that its black-filled left vertex is identified with $u$ while its black-filled right vertex with $v$. The vertices of  $S(\alpha)$ are $a_0,a_1,\dots, a_{25}$. 
After inserting a copy of this graph for $\pair uv$, its vertices are denoted by $\auv u v 0=u, \auv u v 1,\dots, \auv u v{24},\auv u v {25}=v$. The vertices $\auv u v 1,\dots, \auv u v{24}$ are new elements; they are neither in $A_0$, nor in any other copy of an auxiliary graph that we add to $A_0$.
Note that the superscript $uv$ is indicated at some vertices like $\auv u v 1$ in Figure~\ref{figmhhvsz} but, in absence of space, not always. If we drop the superscripts $uv$, then the figure gives $S(\alpha)$; if we add these superscripts, then we obtain the actual copy $\Suv u v(\alpha)$ of $S(\alpha)$ that we have inserted for $\pair uv$.
Similarly, for $\epsilon\in\set{\beta,\gamma,\delta}$ and each eligible pair $\pair uv\in\epsilon_0$, we insert a copy $\Suv u v(\epsilon)$  of $S(\epsilon)$ such that the left and right black-filled vertices are identified with $u$ and $v$, respectively. Again, the superscripts ensure that, apart possibly from the black-filled vertices, we insert disjoint copies. After all these insertions, we obtain $A_1$, which is a superset of $A_0$. 
The edges of our auxiliary graphs are colored by $\xx$, $\yy$, and $\zz$. These edges and their colors are also added to $A_0$. So $A_1$
 becomes a graph whose edges are colored by $\xx$, $\yy$, and $\zz$. 
This graph is denoted  by $A_1$, in the same way as its vertex set.  
Multiple edges between two vertices may occur, and each edge has a unique color in $\set{\xx,\yy,\zz}$. We define $\xx_1\in \Equ(A_1)$ by the rule $\pair xy\in\xx_1$ iff there is a path connecting $x$ and $y$ in the graph $A_1$ such that every edge of this path is $\xx$-colored. We define $\yy_1\in \Equ(A_1)$ and $\zz_1\in \Equ(A_1)$ analogously by using $\yy$-colored paths and $\zz$-colored paths, respectively. Consider the  ternary lattice terms 
\begin{equation}
\begin{aligned}
\hal&=\bigl(\xx \wedge(\yy \vee \zz )\bigr)\vee\bigl(\yy \wedge(\xx \vee \zz )\bigr),\cr
\hbe&=\bigl(\xx \wedge(\zz \vee \yy ))\vee\bigl(\zz \wedge(\xx \vee \yy )\bigr),\cr
\hga&=\bigl(\xx \vee(\yy \wedge \zz )\bigr)\wedge\bigl(\yy \vee(\xx \wedge \zz )\bigr),\text{ and}\cr
\hde&=\bigl(\xx \vee(\zz \wedge \yy )\bigr)\wedge\bigl(\zz \vee(\xx \wedge \yy )\bigr)\text.
\end{aligned}
\label{eqoUtmSs}
\end{equation}
Observe at this point that if we interchange $\yy$ and $\zz$ in our setting, then 
\begin{equation*}
\tuple{\hal,\hbe,\hga,\hde,S(\alpha), S(\beta),S(\gamma),S(\delta)}\,\text{ and }\,\tuple{\hbe,\hal,\hde,\hga,S(\beta), S(\alpha),S(\delta),S(\gamma)}
\end{equation*}
are also interchanged. We we will frequently rely on this fact, called \emph{$\yy$--$\zz$-symmetry}. Now, we are in the position to formulate the key lemma towards Theorem~\ref{thmmain}.

\begin{lemma}\label{keylemma} Let $A_0$ be a set with at least two elements and let 
$\alpha_0,\beta_0,\gamma_0,\delta_0\in \Equ(A_0)$ such that $\alpha_0\leq \gamma_0\vee \delta_0$ and  $\beta_0\leq \gamma_0\vee \delta_0$. Let $L_0$ be the complete sublattice of $\Equ(A_0)$ completely generated by $\set{\alpha_0,\beta_0,\gamma_0,\delta_0}$.
Consider the graph $A_1$ and the equivalences $\xx_1,\yy_1,\zz_1\in\Equ(A_1)$ defined above.  Denote by $L_1$ the complete sublattice of $Equ(A_1)$ completely generated by 
\[\{\hal(\xx_1,\yy_1,\zz_1), \hbe(\xx_1,\yy_1,\zz_1), \hga(\xx_1,\yy_1,\zz_1), \hde(\xx_1,\yy_1,\zz_1))\}\text.  
\] 
Then $L_0$ is isomorphic to a complete sublattice of $L_1$. 
\end{lemma}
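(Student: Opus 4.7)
The plan is to show that the restriction map $\pi\colon L_1\to\Equ(A_0)$, $\theta\mapsto\theta\cap A_0^2$, is an injective complete lattice homomorphism with image exactly $L_0$. This will yield a complete lattice isomorphism $\pi\colon L_1\to L_0$, and its inverse $\pi^{-1}$ will then be the required complete embedding $L_0\hookrightarrow L_1$.

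The technical core is a transfinite induction on $\rank(t)$ establishing the \emph{restriction identity}
\begin{equation*}
t(\hal_1,\hbe_1,\hga_1,\hde_1)\cap A_0^2 \;=\; t(\alpha_0,\beta_0,\gamma_0,\delta_0) \qquad \text{for every $\carm$-term $t$,}
\end{equation*}
where I abbreviate $\hal_1=\hal(\xx_1,\yy_1,\zz_1)$, and similarly for $\hbe_1,\hga_1,\hde_1$. By Lemma~\ref{lemmateRmGen} this identity tells me at once that the image of $\pi$ equals $L_0$ and that $\pi$ preserves arbitrary joins (preservation of meets is automatic, as intersection commutes with $\cap A_0^2$). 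The base case $\rank(t)=0$ asks for $\hep_1\cap A_0^2=\epsilon_0$ for each $\epsilon\in\set{\alpha,\beta,\gamma,\delta}$, which I would verify by inspecting the colored graphs in Figure~\ref{figmhhvsz}: for an eligible pair $\pair uv\in\epsilon_0$ I trace formula~\eqref{eqoUtmSs} on the inserted gadget $\Suv uv(\epsilon)$ and check that this gadget contributes exactly $\pair uv$ to $\hep_1$ while creating no spurious $A_0$-relation in any of the other three terms. The $\yy$--$\zz$-symmetry pairs $(\hal,S(\alpha))\leftrightarrow(\hbe,S(\beta))$ and $(\hga,S(\gamma))\leftrightarrow(\hde,S(\delta))$, halving the work. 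The hypotheses $\alpha_0,\beta_0\le\gamma_0\vee\delta_0$ enter here: the ``$\yy\vee\zz$'' factor appearing in $\hal$ and in $\hbe$ needs a $\yy$- or $\zz$-colored path in $A_1$ from $u$ to $v$, and such a path is supplied by the $\gamma$- and $\delta$-gadgets sitting along any $\gamma_0\vee\delta_0$-witnessing sequence for $\pair uv$ in $A_0$.

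The meet step of the induction is immediate. The join step, which I expect to be the principal obstacle, asks that whenever $\pair uv\in A_0^2$ is joined in $A_1$ by a sequence of steps each lying in some $t_i(\hal_1,\hbe_1,\hga_1,\hde_1)$, the pair is already joined inside $A_0$ by a corresponding sequence in the $t_i(\alpha_0,\beta_0,\gamma_0,\delta_0)$. The argument is a path-rerouting in $A_1$: every excursion of the path that enters and exits an inserted gadget $\Suv{u'}{v'}(\epsilon)$ at both its black-filled vertices is collapsed, via the inductive hypothesis, to a single $\epsilon_0$-step $\pair{u'}{v'}$ inside $A_0$, while any excursion that enters and leaves at a single black-filled vertex is discarded. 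The symmetric shape of Skublics's gadgets in Figure~\ref{figmhhvsz} is what makes this rerouting uniform across all four gadget types. Once the restriction identity is in hand, injectivity of $\pi$ will follow from the complementary observation that each $\theta\in L_1$ is already determined by $\theta\cap A_0^2$: inside every inserted gadget $\Suv uv(\epsilon)$ the $\theta$-class of each new vertex is forced by the local graph structure together with whether $\pair uv\in\theta\cap A_0^2$ (and the analogous data on the other gadgets attached to $u$ and $v$). Combining everything, $\pi$ is the desired complete-lattice isomorphism $L_1\to L_0$.
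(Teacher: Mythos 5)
There is a genuine gap, and it is fatal to the strategy as stated: the restriction map $\pi\colon L_1\to\Equ(A_0)$, $\theta\mapsto\theta\cap A_0^2$, is \emph{not} injective, so it cannot be the desired isomorphism. Concretely, take any eligible pair $\pair uv\in\alpha_0$ and compare $\hal_1$ with $\hal_2:=\hal_1\wedge(\hga_1\vee\hde_1)$; both belong to $L_1$. On the gadget $\Suv u v(\alpha)$ the equivalence $\hal_1$ has the block $\set{u,\auv u v{5},\auv u v{10},\auv u v{15},\auv u v{20},v}$, whereas every interior vertex of $\Suv u v(\alpha)$ is a singleton of $\hga_1$ and of $\hde_1$, hence of $\hal_2$; so $\pair{u}{\auv u v{5}}\in\hal_1\setminus\hal_2$ and $\hal_1\neq\hal_2$. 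On the other hand, the hypothesis $\alpha_0\le\gamma_0\vee\delta_0$ forces $\alpha_0\subseteq\hal_2\cap A_0^2\subseteq\hal_1\cap A_0^2$, so if your base case $\hal_1\cap A_0^2=\alpha_0$ holds (it does), then $\pi(\hal_1)=\pi(\hal_2)=\alpha_0$. Thus your closing claim that every $\theta\in L_1$ is determined by $\theta\cap A_0^2$ is false, and $L_1\cong L_0$ should not be expected: the lemma only asserts that $L_0$ is isomorphic to a complete sublattice of $L_1$, and $L_1$ is genuinely larger in general.

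The paper repairs exactly this point by not working with $L_1$ itself: it passes to the complete sublattice $L_2\leq L_1$ completely generated by the four corrected elements $\hep_2=\hep_1\wedge(\hga_1\vee\hde_1)$ (this is where the hypotheses $\alpha_0,\beta_0\leq\gamma_0\vee\delta_0$ enter, to guarantee $\pair uv\in\hep_2$ for eligible $\pair uv\in\epsilon_0$), determines the restriction of each generator to every gadget via the ``perfect lists'' of Figures~\ref{figketto} and \ref{figHarom}, and collapses the surviving non-singleton gadget blocks by an equivalence $\Theta$ lying below every member of $L_2$. After factoring by $\Theta$, the four generators become exactly the images of $\alpha_0,\beta_0,\gamma_0,\delta_0$, so $L_2/\Theta$ and the copy of $L_0$ have a \emph{common complete generating set} and therefore coincide; no induction over $\carm$-terms is needed anywhere. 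By contrast, your route still has to carry out the join step of the restriction identity, and the sketch given is not yet an argument: replacing an excursion through $\Suv{u'}{v'}(\epsilon)$ by ``a single $\epsilon_0$-step'' does not witness membership in $\bigvee_i t_i(\alpha_0,\beta_0,\gamma_0,\delta_0)$, since $\epsilon_0$ need not lie below that join; what you would actually need is control of $\restrict{\theta}{\Suv{u'}{v'}(\epsilon)}$ for \emph{arbitrary} $\theta\in L_1$, a much stronger inductive invariant than the restriction identity itself.
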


\begin{proof} Let $\carm$ be a regular infinite cardinal such that  $\carm > 2^{|A_0|}$. Note that if $A_0$ is finite, then we choose $\carm=\aleph_0$. 
To ease the notation, for $\epsilon\in\set{\alpha,\beta,\gamma,\delta}$, we let $\hep_1=\hep(\xx_1,\yy_1,\zz_1)$. Then, by Lemma~\ref{lemmateRmGen}, 
\begin{equation}
L_1=\set{t(\hal_1,\hbe_1,\hga_1,\hde_1): t\in  T(\carm,\set{x_1,x_2,x_3,x_4})}\text.
\label{eqdefLonedkT}
\end{equation}

Let $\pair uv\in A_0^2$ be a nontrivial pair, that is, $u\neq v$. 
Let $\szi1=\xx_1 \wedge(\yy_1 \vee \zz_1)$. By a \emph{$(\yy\cup\zz)$-path} we mean a path $P$ in the graph $A_1$ such that each edge of $P$ is $\yy$-colored or $\zz$-colored. For a nontrivial pair $\pair xy\in A_1^2$,   
\begin{equation}
\parbox{10.5cm}{$\pair xy\in \xx_1$ iff there is a $\xx$-colored path connecting the vertices $x$ and $y$, and   $\pair xy\in \yy_1\vee\zz_1$ iff there is a $(\yy\cup\zz)$-path between  $x$ and $y$.}
\label{eqPthdSrfT}
\end{equation}
The \emph{restriction} of an equivalence $\mu$ to a subset $X$ will be denoted by $\restrict \mu X$. For $\mu\in\Equ(A_1)$ and $\epsilon\in\set{\alpha,\beta,\gamma,\delta}$, 
a list of all $\restrict\mu{S(\epsilon)}$-blocks will be called \emph{perfect} if for  every block $B$ in the list, $B\cap\set{u,v}=\emptyset$ implies that $B$ is also a $\mu$-block, not just a  $\restrict\mu{S(\epsilon)}$-block. From this aspects, all  singleton blocks will belong to the list even
if we give only the non-singleton blocks explicitly.  Perfectness will always be an obvious consequence of \eqref{eqPthdSrfT} or an analogous statement on other colors. It is easy to see that 
the $\restrict{(\yy_1\vee\zz_1)}{S(\alpha)}$-blocks are $\set{ \auv u v{1+5j},\auv u v{2+5j}}$ and $\set{ \auv u v{3+5j},\auv u v{4+5j}}$ for $j\in\set{0,2,4}$, and $\{u=\auv u v0$, $\auv u v5$, $\auv u v6$, $\auv u v7$, $\auv u v8$, $\auv u v9$, $\auv u v{10}$, $\auv u v{15}$, $\auv u v{16}$, $\auv u v{17}$,  $\auv u v{18}$, $\auv u v{19}$, $\auv u v{20}$,  $\auv u v{25}=v\}$; this list is perfect.
Hence,  
\begin{equation}
\parbox{8.0cm}
{the non-singleton $\restrict{\szi1}{\Suv u v(\alpha)}$-blocks  are  $
\set{\auv u v5,\auv u v{10}}$ and  $\set{\auv u v{15},\auv u v{20}}$, and this list is perfect.}
\label{eqszeblalph}
\end{equation}
Below, we obtain several similar observations easily; usually based on \eqref{eqPthdSrfT} (possibly for other colors) and, occasionally, on some earlier observations. Fortunately, the $\yy$--$\zz$-symmetry often offers a more economic argument; for example, this symmetry turns \eqref{eqszeblalph} into 
\begin{equation}
\parbox{8.0cm}
{The non-singleton $\restrict{\szi1}{\Suv u v(\beta)}$-blocks  are  $
\set{\buv u v5,\buv u v{10}}$ and  $\set{\buv u v{15},\buv u v{20}}$, and this list is perfect.}
\label{eqszepblBet}
\end{equation}
The $\restrict{(\yy_1\vee\zz_1)}{\Suv u v(\gamma)}$-blocks are  $\set{\cfel uv2,\cfel uv3}$ and $\Suv u v(\gamma)\setminus\set{\cfel uv2,\cfel uv3}$, and this list is perfect. 
This fact and $\yy$--$\zz$-symmetry yield that
\begin{equation}
\parbox{10 cm}
{The non-singleton $\restrict{\szi1}{\Suv u v(\gamma)}$-blocks  are $\set{u,\cdn uv1}$, $\set{\cdn uv2,\cdn uv3}$, and $\set{\cdn uv4,v}$, while the non-singleton $\restrict{\szi1}{\Suv u v(\delta)}$-blocks  are $\set{u,\ddn uv1}$, $\set{\ddn uv2,\ddn uv3}$, and $\set{\ddn uv4,v}$; both lists are perfect.}
\label{eqszijUverSt}
\end{equation}
Next, let $\szi2=\yy_1\wedge(\xx_1\vee \zz_1)$. The  $\restrict{(\xx_1\vee\zz_1)}{\Suv u v(\alpha)}$-blocks are $\{\auv u v {1+5j}$, $\auv u v {2+5j}\}$  and  $\{\auv u v {3+5j}$, $\auv u v {4+5j}\}$ for $j\in\set{1,3}$, and $\{u$, $\auv u v 1$, $\auv u v 2$, $\auv u v 3$, $\auv u v 4$, $\auv u v {5}$,   
$\auv u v {10}$, $\auv u v {11}$, $\auv u v {12}$,  $\auv u v {13}$, $\auv u v {14}$, $\auv u v {15}$, $\auv u v {20}$, $\auv u v {21}$, $\auv u v {22}$, $\auv u v {23}$, $\auv u v {24}$, $v\}$; this list is perfect. Thus,  
\begin{equation}
\parbox{8 cm}
{the non-singleton  $\restrict{\szi 2}{\Suv u v(\alpha)}$-blocks are $\set{u,\auv u v{5}}$,  $\set{\auv u v{10},\auv u v{15}}$, and $\set{\auv u v{20},v}$, and this list is perfect.}
\label{eqwqGbjTjN}
\end{equation}
The  $\restrict{(\xx_1\vee\zz_1)}{\Suv u v(\beta)}$-blocks are $\{\buv u v {2+5j}$, $\buv u v {3+5j}\}$ for $j\in\set{0,1,2,3,4}$ and  
$\{u$, $\buv u v 1$, $\buv u v 4$, $\buv u v 5$,  $\buv u v {6}$,  $\buv u v {9}$,   $\buv u v {10}$,  $\buv u v {11}$,  $\buv u v {14}$,  $\buv u v {15}$,  $\buv u v {16}$,  $\buv u v {19}$,  $\buv u v {20}$,  $\buv u v {21}$,  $\buv u v {24}$, $v\}$; this list is perfect. Thus,
\begin{equation}
\parbox{9 cm}
{all  $\restrict{\szi 2}{\Suv u v(\beta)}$-blocks are singletons, and this list is perfect.}
\label{eqwqjfgTmBxjpQN}
\end{equation}
The $\restrict{(\xx_1\vee\zz_1)}{\Suv u v(\gamma)}$-blocks  are 
 $\set{\cdn uv2,\cdn uv3}$ and $\Suv u v(\gamma)\setminus\set{\cdn uv2,\cdn uv3}$, and this list is perfect. On the other hand, $\Suv u v(\delta)$ is in itself a $\restrict{(\xx_1\vee\zz_1)}{\Suv u v(\delta)}$-block.  Hence, 
\begin{equation}
\parbox{11 cm}
{The non-singleton $\restrict{\szi2}{\Suv u v(\gamma)}$-blocks  are $\set{u,\cfel uv1}$, $\set{\cfel uv2,\cfel uv3}$, and $\{\cfel uv4$, $v\}$. The  $\restrict{\szi2}{\Suv u v(\delta)}$-blocks  are the same as the $\restrict{\psi_1}{\Suv u v(\delta)}$-blocks: 
 $\{u$, $\ddn uv1$, $\dfel uv1\}$, $\set{\ddn uv2,\ddn uv3}$, $\set{\dfel uv2,\dfel uv3}$, and   $\set{\ddn uv4,\dfel uv4, v}$. Both lists are perfect.}
\label{eqszijZtrSw}
\end{equation}
We know that $\hal_1=\szi1\vee\szi2$. Thus,  
\eqref{eqszeblalph}, 
\eqref{eqszepblBet}, \eqref{eqszijUverSt}, \eqref{eqwqGbjTjN}, \eqref{eqwqjfgTmBxjpQN}, and \eqref{eqszijZtrSw} imply that
the only non-singleton $\restrict{\hal_1}{\Suv u v(\alpha)}$-block is   $
\set{u,\auv u v{5},\auv u v{10},\auv u v{15},\auv u v{20},v}$; the non-singleton $\restrict{\hal_1}{\Suv u v(\beta)}$-blocks are
$\set{\buv u v5, \buv u v{10}}$ and $\set{\buv u v{15},\buv u v{20}}$; the non-singleton $\restrict{\hal_1}{\Suv u v(\gamma)}$-blocks are   $\set{u,\cdn uv1,\cfel uv1}$, $\set{\cdn uv2,\cdn uv3}$, $\{\cfel uv2$, $\cfel uv3\}$, and $\set{\cdn uv4,\cfel uv4,v}$;  and finally, the non-singleton $\restrict{\hal_1}{\Suv u v(\delta)}$-blocks are   $\{u,\ddn uv1$, $\dfel uv1\}$, $\set{\ddn uv2,\ddn uv3}$, $\set{\dfel uv2,\dfel uv3}$, and $\{\ddn uv4$, $\dfel uv4,v\}$. These lists are prefect; we will refer to them via Figure~\ref{figketto}, where the lists described above are indicated with appropriate edges. Note in advance that
\begin{equation}
\text{every list represented by Figure~\ref{figketto} is perfect.}
\label{eqfourlistPerF}
\end{equation}  
At present, we can assert 
 \eqref{eqfourlistPerF} and the validity of Figure~\ref{figketto} 
only for the lists represented by the thick dotted $\alpha_1$-edges, but there will be $\hbe_1$-edges, $\hga_1$-edges, and $\hde_1$-edges soon. 
The meaning of these  edges is that for $\epsilon,\mu \in\set{\alpha,\beta,\gamma,\delta}$ and $x,y\in\Suv u v(\mu)$, we have $\pair xy\in\restrict{\hep_1}{\Suv u v(\mu)}$ iff there is an $\hep_1$-colored path in $\Suv u v(\mu)$ connecting $x$ and $y$ in the figure. At present with $\epsilon=\alpha$, this means that if  $x,y\in\Suv u v(\mu)$, 
then  $\pair xy\in\restrict{\hal_1}{\Suv u v(\mu)}$ iff there is a thick dotted path in $\Suv u v(\mu)$ connecting $x$ and $y$ in the figure.
The $\yy$--$\zz$-symmetry takes care of the thick solid $\hbe_1$-edges.
Furthermore, again by this symmetry, it suffices to deal with the thin dotted $\hga_1$-edges. For $\epsilon\in\set{\alpha,\beta}$,
the intersection of any two of $\restrict{\xx_1}{\Suv u v(\epsilon)}$, $\restrict{\yy_1}{\Suv u v(\epsilon)}$, and $\restrict{\zz_1}{\Suv u v(\epsilon)}$ is the least equivalence on $\Suv u v(\epsilon)$. 
This makes it clear that for every $x$ in $\Suv u v(\epsilon)\setminus\set{u,v}$, the $( \restrict{\yy_1}{\Suv u v(\epsilon)}\wedge \restrict{\zz_1}{\Suv u v(\epsilon)})$-block of $x$ and the $(\restrict{\xx_1}{\Suv u v(\epsilon)} \wedge \restrict{\zz_1}{\Suv u v(\epsilon)} )$-block of $x$ are the singleton set $\set{x}$. Therefore, the $ \restrict{\hga_1}{\Suv u v(\epsilon)} $-block of $x$ is the  $(\restrict{\xx_1}{\Suv u v(\epsilon)}\wedge  \restrict{\yy_1}{\Suv u v(\epsilon)})$-block of $x$, which is $\set x$. Hence, the $\restrict{\hga_1}{\Suv u v(\epsilon)}$-blocks are singletons and this list is perfect. So the absence of thin dotted edges in $\Suv u v(\alpha)$ and $\Suv u v(\beta)$ in Figure~\ref{figketto} is correct.  
The $\restrict{(\xx_1\vee(\yy_1\wedge \zz_1))}{\Suv u v(\gamma)}$-blocks are $\set{\cdn uv2,\cdn uv3}$ and $\Suv u v(\gamma)\setminus \set{\cdn uv2,\cdn uv3}$, and this list is perfect. The $\restrict{(\yy_1\vee(\xx_1\wedge \zz_1))}{\Suv u v(\gamma)}$-blocks are $\set{\cfel uv2,\cfel uv3}$ and $\Suv u v(\gamma)\setminus \set{\cfel uv2,\cfel uv3}$, and this list is perfect again. Therefore, 
\begin{equation}
\parbox{8cm}
{the $\restrict{\hga_1}{\Suv u v(\gamma)}$-blocks are $\{u$, $v$, $\cfel uv1$, $\cdn uv1$, $\cfel uv4$, $\cdn uv4\}$, $\{\cfel uv2$, $\cfel uv3\}$, and $\{\cdn uv2$, $\cdn uv3\}$, and this list is perfect.}
\label{eqPrhgJmV}
\end{equation}
The $\restrict{(\xx_1\vee(\yy_1\wedge \zz_1))}{\Suv u v(\delta)}$-blocks are $\set{\ddn uv2,\ddn uv3}$ and $\Suv u v(\delta)\setminus \set{\ddn uv2,\ddn uv3}$. The $\restrict{(\yy_1\vee(\xx_1\wedge \zz_1))}{\Suv u v(\delta)}$-blocks are 
$\{\dfel uv2$, $\dfel uv3\}$, $\{\ddn uv2$, $\ddn uv3\}$, $\{u$, $\dfel uv1$, $\ddn uv 1\}$, and $\{v$, $\dfel uv4$, $\ddn uv 4\}$. Both lists are perfect. Hence, 
\begin{equation}
\parbox{8cm}
{the $\restrict{\hga_1}{\Suv u v(\delta)}$-blocks are $\{\dfel uv2$, $\dfel uv3\}$, $\{\ddn uv2$, $\ddn uv3\}$, $\{u$, $\dfel uv1$, $\ddn uv 1\}$, and $\{v$, $\dfel uv4$, $\ddn uv 4\}$, and this list is perfect.}
\label{eqPrhgJmV}
\end{equation}
This proves \eqref{eqfourlistPerF} and the correctness of Figure~\ref{figketto}. As an obvious consequence of Figure~\ref{figketto}, we note that for every eligible pair $\pair u v$,
\begin{equation}
\text{if }\epsilon\in\set{\alpha,\beta,\gamma,\delta}\text{ and }\pair u v\in\epsilon_0\text{, then }\pair u v\in\hep_1\text.
\label{eqHpeXtr}
\end{equation}

\begin{figure}[htc]
\centerline
{\includegraphics[scale=1.0]{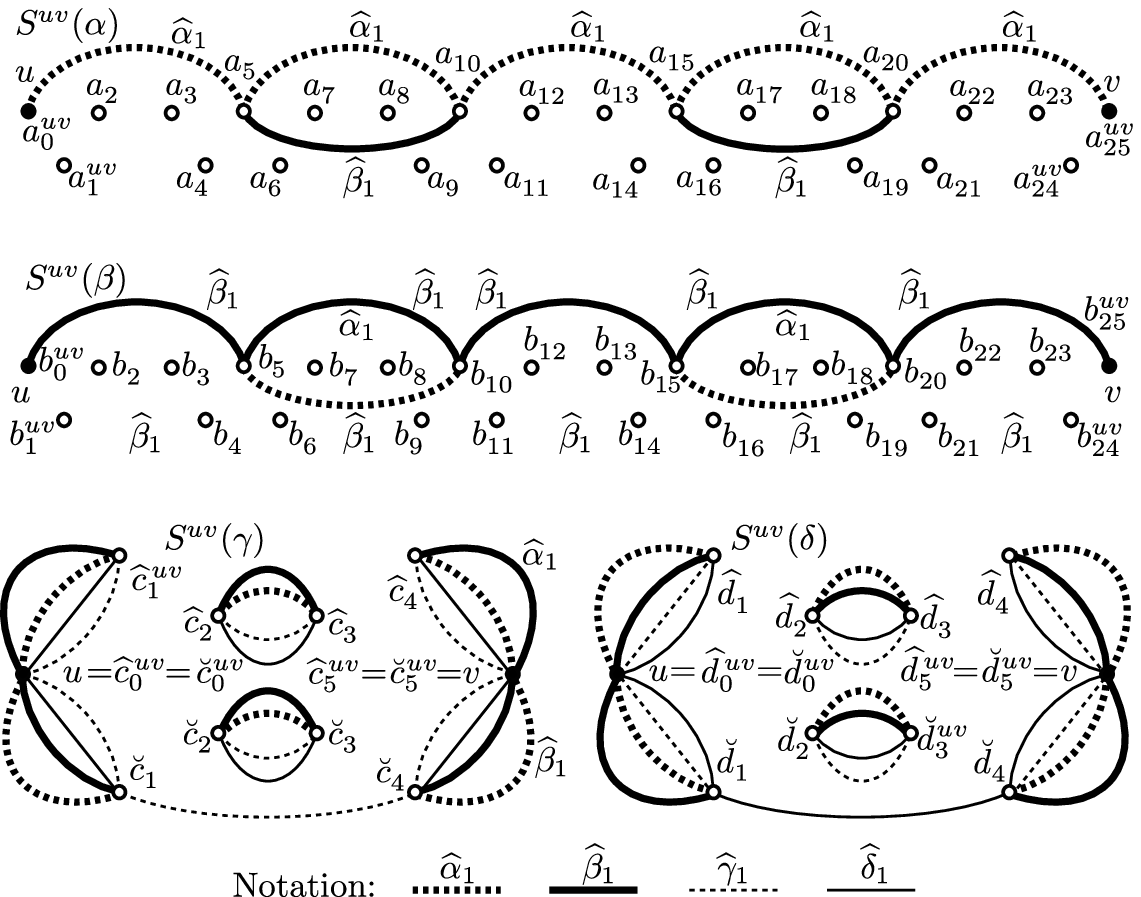}}
\caption{The restrictions of $\hal_1$, $\hbe_1$, $\hga_1$, and $\hde_1$ to the auxiliary  graphs\label{figketto}}
\end{figure}

Next, for $\epsilon\in\set{\alpha,\beta,\gamma,\delta}$, we let $\hep_2=\hep_1\wedge(\hga_1\vee\hde_1)$. Clearly $\hga_2=\hga_1$ and $\hde_2=\hde_1$. Similarly to \eqref{eqfourlistPerF} and Figure~\ref{figketto}, we have to explore how these equivalences are related to the auxiliary graphs. The situation will be summarized soon in 
Figure~\ref{figHarom}.
Note in advance that $\Suv u v(\gamma)$ and $\Suv u v(\delta)$ will be almost the same in 
Figure~\ref{figHarom} as in Figure~\ref{figketto}, whence only the difference will be indicated. 

First, consider an  eligible pair $\pair u v\in\alpha_0$. By the assumption $\alpha_0\leq \gamma_0\vee\delta_0$, there is a \emph{$(\gamma_0\cup\delta_0)$-sequence} $Q$ from $u$ to $v$ in $A_0$. This means that there is a finite sequence of elements of $A_0$ beginning with $u$ and ending with $v$ such that each pair of consecutive members of this sequence 
is collapsed by $\gamma_0$ or $\delta_0$. (As opposed to $A_1$, $A_0$ is not a graph. This is why  we do not call this sequence a  path.)
Applying \eqref{eqHpeXtr} to the pair $\pair u v$ and also to every edge of $Q$, we obtain that $\pair u v\in \hal_1\wedge(\hga_1\vee \hde_1)=\hal_2$. However, 
 since Figure~\ref{figketto} is perfect with respect to $\hga_1$ and $\hde_1$ by  \eqref{eqfourlistPerF}, all elements of $\Suv u v(\alpha)\setminus\set{u,v}$ are singleton blocks of $\hga_2=\hga_1$ and $\hde_2$, and thus they are singleton blocks of $\hal_2$.
Therefore, the situation for $\Suv u v(\alpha)$ is correctly depicted in the upper part of Figure~\ref{figHarom}, which gives a perfect list. 
We conclude similarly that Figure~\ref{figHarom} represents a perfect list for $\Suv u v(\beta)$. Since each $\alpha$-edge  within the graph $\Suv u v(\gamma)$ and 
$\Suv u v(\delta)$ is parallel to some $\gamma$-edge or $\delta$-edge, we have that  $\restrict{\hal_2}{\Suv u v(\gamma)}=\restrict{\hal_1}{\Suv u v(\gamma)}$ and $\restrict{\hal_2}{\Suv u v(\delta)}=\restrict{\hal_1}{\Suv u v(\delta)}$. We obtain in the same way, or from the ($\yy$--$\zz$)-symmetry, that 
$\restrict{\hbe_2}{\Suv u v(\gamma)}=\restrict{\hbe_1}{\Suv u v(\gamma)}$ and $\restrict{\hbe_2}{\Suv u v(\delta)}=\restrict{\hbe_1}{\Suv u v(\delta)}$.
Thus, since $\hep_2\subseteq \hep_1$ for all $\epsilon\in\set{\alpha,\beta, \gamma,\delta}$,  \eqref{eqfourlistPerF} yields that
\begin{equation}
\text{every list represented by  Figure~\ref{figHarom} is perfect.}
\label{eqfgHrpRfct}
\end{equation}

\begin{figure}[htc]
\centerline
{\includegraphics[scale=1.0]{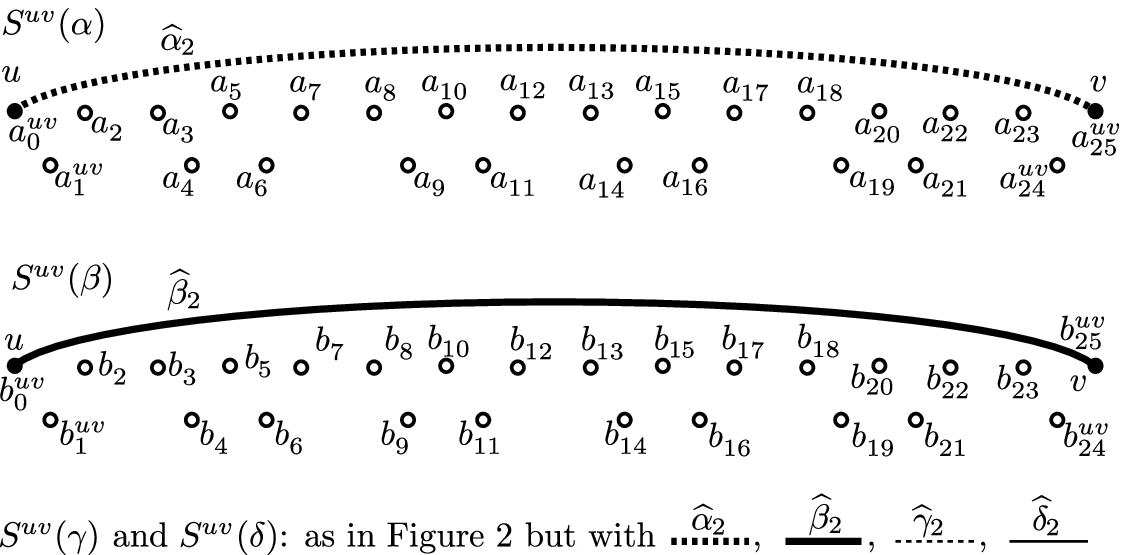}}
\caption{The restrictions of $\hal_2$, $\hbe_2$, $\hga_2$, and $\hde_2$ to the auxiliary graphs\label{figHarom}}
\end{figure}

The complete sublattice of $L_1$ (and also of  $\Equ(A_1)$) completely generated by $\set{\hal_2, \hbe_2, \hga_2, \hde_2}$ will be denoted by $L_2$.
Let $\Theta$ denote the smallest equivalence on $A_1$ collapsing each of the sets 
$\set{u,\cdn uv1,\cfel uv1}$, 
$\set{v,\cdn uv4,\cfel uv4}$, $\set{\cdn uv2,\cdn uv3}$, $\set{\cfel uv2,\cfel uv3}$ for eligible pairs $\pair uv\in \gamma_0$ and 
$\set{u,\ddn uv1,\dfel uv1}$, 
$\set{v,\ddn uv4,\dfel uv4}$, $\set{\ddn uv2,\ddn uv3}$, and $\set{\dfel uv2,\dfel uv3}$, for eligible $\pair uv\in \delta_0$. The two-element sets in the previous sentence  are $\Theta$-blocks by \eqref{eqfgHrpRfct} and Figures~\ref{figketto} and \ref{figHarom}, but the rest of these sets need not be such. However, 
\begin{equation}
\text{each $\Theta$-block contains a (unique) element of $A_0$.}
\label{eqTHeTablll}
\end{equation} 
Clearly, 
\begin{equation}
\Theta\leq \hep_2\text{ for all }\epsilon\in\set{\alpha,\beta,\gamma,\delta}\text{ and }\restrict {\Theta}{A_0}=\zero{A_0}\text.
\label{eqdgDJrw}
\end{equation}
Hence, by Lemma~\ref{lemmateRmGen}, $\Theta\leq \mu$ for all $\mu\in L_2$. 
Let  $L_3:=\set{\mu/\Theta: \mu\in L_2}\subseteq \Equ(A_1/\Theta)$. Using the folkloric  Correspondence Theorem from, say,  Burris and Sankappanavar~\cite[Theorem 6.20]{burrissankappanavar}, we obtain that $L_3$ is a complete sublattice of $\Equ(A_1/\Theta)$ and, furthermore, 
\begin{equation}
\parbox{9.5cm}{$f_1\colon L_2\to L_3$, defined by $\mu\mapsto \mu/\Theta$, is a lattice isomorphism, and
$L_3$ is generated by $\set{\hal_2/\Theta=f_1(\hal_2), \hbe_2/\Theta,\hga_2/\Theta,\hde_2/\Theta}$.}
\label{eqdghZMnNQb}
\end{equation}
Using the second half of \eqref{eqdgDJrw}, 
it follows that the map $A_0\to A_1/\Theta$, defined by $u\mapsto \blokk u\Theta$, is injective. Hence, the map 
\begin{equation}
\text{$g\colon A_0\to A_0/\Theta:=\set{\blokk u\Theta: u\in A_0}$, defined by $u\mapsto \blokk u\Theta$, is a bijection.}
\label{eqgTfnGjCtN}
\end{equation}
Since $A_0/\Theta$ is a subset of $A_1/\Theta$, the extension map $\Equ(A_0/\Theta)\to \Equ(A_1/\Theta)$, defined by $\mu\mapsto \mu\cup \zero{A_1/\Theta}$ is obviously a complete lattice embedding. Composing this map with the lattice isomorphism $\Equ(A_0)\to\Equ(A_0/\Theta)$ induced by $g$, we obtain that
the map 
\[f_2\colon \Equ(A_0)\to \Equ(A_1/\Theta)\text{, defined by }\mu\mapsto\set{\pair{\blokk u\Theta}{\blokk v\Theta}:\pair u v\in \mu}\cup \zero{A_1/\Theta},
\]
is a complete lattice embedding. Hence, 
\begin{equation}
\parbox{8cm}
{$L_4:=f_2(L_0)$ is isomorphic to $L_0$, and $L_4$ is completely generated by $\set{f_2(\alpha_0),f_2(\beta_0),f_2(\gamma_0),f_2(\delta_0) }$.}
\label{eqpnWErQTnbW}
\end{equation}
Next, we claim that for each $\epsilon\in\set{\alpha,\beta,\gamma,\delta}$,  $\hep_2/\Theta=f_2(\epsilon_0)$. To see this, 
let $\pair {\blokk u\Theta}{\blokk v\Theta}\in (A_1/\Theta)^2$ be a nontrivial pair. By \eqref{eqTHeTablll}, we can assume that $u$ and $v$ are in $A_0$.
If  $\pair {\blokk u\Theta}{\blokk v\Theta}\in f_2(\epsilon_0)$, then $\pair u v\in \epsilon_0$, Figure~\ref{figHarom} gives that $\pair u v\in\hep_2$, whence  $\pair {\blokk u\Theta}{\blokk v\Theta}\in \hep_2/\Theta$.  Conversely, assume that $\pair {\blokk u\Theta}{\blokk v\Theta}\in \hep_2/\Theta$. Hence, $\pair u v\in \hep_2$, and \eqref{eqfgHrpRfct} gives that $\pair u v\in \epsilon_0$. Thus, $\pair {\blokk u\Theta}{\blokk v\Theta}\in f_2(\epsilon_0)$, proving that  $\hep_2/\Theta=f_2(\epsilon_0)$.  
This equality, \eqref{eqdghZMnNQb}, and \eqref{eqpnWErQTnbW} yield that $L_3$ and $L_4$ have a common complete generating set. Thus, $L_4=L_3$. Hence, using  \eqref{eqdghZMnNQb} and \eqref{eqpnWErQTnbW} again, 
$L_2\cong L_0$. Therefore, since $L_2$ is a complete sublattice of $L_1$, the statement of Lemma~\ref{keylemma} follows.
\end{proof}

\subsection{The rest of the proof}
Now, armed with Lemma~\ref{keylemma} and some earlier results, we are in the position to complete the proof of Theorem~\ref{thmmain} in a short way.

\begin{proof}[Proof of Theorem~\ref{thmmain}]
Assume that $A$ is set with accessible cardinality at least 3. For convenience, we can also assume  that $A$ is not an even natural number, because otherwise we would construct a larger set $A'$ from $A$ by adding a new element and then we would use that $\Equ(A)$ is isomorphic to a sublattice of $\Equ(A')$. 

For a finite $A$, it was first proved by  Strietz~\cite{strietz} that $\Equ(A)$ is four-generated; however, we will use the four generators constructed by Z\'adori~\cite{zadori}. 
Note that these generators for $|A|=33$ are reproduced in 
\cite[Figure 1]{czgfourlarge}, and 33 is sufficiently large to indicate the general case for $|A|$ odd. Therefore, to be in harmony with the infinite case, we use 
$\fcitind\alpha{czgfourlarge}$, $\fcitind\beta{czgfourlarge}$, $\fcitind\gamma{czgfourlarge}$, and $\fcitind\delta{czgfourlarge}$ to denote Z\'adori's generators. The superscript ``f'' indicates that we are dealing with the finite case.    
Alternatively, we can use a different system of generators from  Z\'adori~\cite{zadori}, which are also given for $|A|=59$  in  \cite{czgooptwo}; these generators will be denoted by $\fcitind\alpha{czgooptwo}$, $\fcitind\beta{czgooptwo}$, $\fcitind\gamma{czgooptwo}$, and $\fcitind\delta{czgooptwo}$. Whichever of the two systems is considered, the join of two appropriately chosen generators is clearly $\unit{A}$. Namely, 
\begin{equation*}
\begin{aligned}
\fcitind\alpha{czgfourlarge}\vee \fcitind\beta{czgfourlarge}= \fcitind\alpha{czgfourlarge}\vee \fcitind\gamma{czgfourlarge} &=\fcitind\alpha{czgfourlarge}\vee \fcitind\delta{czgfourlarge}=\fcitind\beta{czgfourlarge}\vee \fcitind\gamma{czgfourlarge}=\unit{A}
\cr
 \text{and }
\fcitind\alpha{czgooptwo}\vee \fcitind\beta{czgooptwo} &= \fcitind\alpha{czgooptwo}\vee \fcitind\gamma{czgooptwo}= \fcitind\beta{czgooptwo}\vee \fcitind\gamma{czgooptwo}=\unit{A}\text.
\end{aligned}
\end{equation*}
Thus, we have many choices to fulfill the conditions $\alpha_0\leq\gamma_0\vee \delta_0$ and $\beta_0\leq\gamma_0\vee \delta_0$ of Lemma~\ref{keylemma}; let, say, $\tuple{\alpha_0,\beta_0,\gamma_0,\delta_0}= \tuple{\fcitind\gamma{czgfourlarge},\fcitind\delta{czgfourlarge},\fcitind\alpha{czgfourlarge},\fcitind\beta{czgfourlarge}}$. 

If $A$ is infinite but accessible, then we know from \cite{czgfourlarge} and \cite{czgooptwo} that $\Equ(A)$ is completely generated by four elements. Let $\{\icitind\alpha{czgfourlarge}$, $\icitind\beta{czgfourlarge}$, $\icitind\gamma{czgfourlarge}$, $\icitind\delta{czgfourlarge}\}$ and $\{\icitind\alpha{czgooptwo}$, $\icitind\beta{czgooptwo}$, $\icitind\gamma{czgooptwo}$, $\icitind\delta{czgooptwo}\}$ denote the complete generating sets constructed in \cite{czgfourlarge} and \cite{czgooptwo}, respectively. Here the superscript ``i'' comes from  ``infinite''. 
Again, we can pick two generators whose join is $\unit A$, but it suffices to see that one of the following six joins equals $\unit A$:
\begin{equation*}
\begin{aligned}
\icitind\alpha{czgfourlarge}\vee \icitind\beta{czgfourlarge} &= \icitind\alpha{czgfourlarge}\vee \icitind\gamma{czgfourlarge} =\icitind\alpha{czgfourlarge}\vee \icitind\delta{czgfourlarge}=\unit{A}\, \text{ and}
\cr
\icitind\alpha{czgooptwo}\vee \icitind\beta{czgooptwo} &= \icitind\alpha{czgooptwo}\vee \icitind\gamma{czgooptwo}= \icitind\beta{czgooptwo}\vee \icitind\gamma{czgooptwo}=\unit{A}\text.
\end{aligned}
\end{equation*}
Let, say, $\tuple{\alpha_0,\beta_0,\gamma_0,\delta_0}= \tuple{\icitind\gamma{czgfourlarge},\icitind\delta{czgfourlarge},\icitind\alpha{czgfourlarge},\icitind\beta{czgfourlarge}}$. 

Next, if $A$ is infinite, then let $A_0=A$, and consider the set $A_1$ constructed before Lemma~\ref{keylemma}. Clearly, $|A_1|=|A|$.
Denote by $K'$ the complete sublattice of $\Equ(A_1)$ completely generated by its three-element subset $\set{\xx_1,\yy_1,\zz_1}$. Since $L_0=\Equ(A_0)=\Equ(A)$ and $L_1$ from Lemma~\ref{keylemma} is clearly a complete sublattice of $K'$, Lemma~\ref{keylemma} gives that 
\begin{equation}
\text{$\Equ(A)$ is isomorphic to a complete sublattice of $K'$.}
\label{eqzQwvYtTnB}
\end{equation}
Now, let $B=A$. Since  $|A_1|=|A|=|B|$, we have an isomorphism $h\colon \Equ(A_1)\to \Equ(B)$. Clearly, $K:=h(K')$ is isomorphic to $K'$ and it is a complete sublattice of $\Equ(B)$. Also, $K$  is completely generated by the three-element set $\set{h(\xx_1),h(\yy_1),h(\zz_1)}$, and $\Equ(A)$ is isomorphic to a complete  sublattice of $K$. This proves the theorem for $A$ infinite.

If $A$ is finite, then we can drop ``complete'' from the consideration above. 
Let $B=A_1$; it is finite by construction. 
By \eqref{eqzQwvYtTnB}, $\Equ(A)$ is isomorphic to a sublattice of the three-generated $K'$ and $K'$ is a sublattice of $\Equ(B)$. Hence, we can let $K=K'$, which completes the proof of the theorem. 
\end{proof}

\end{document}